\title{Graph of $uv$-paths in $2$-connected graphs \thanks{Partially supported by Conacyt, M\'exico.}}
\author{Eduardo Rivera-Campo}
\affil{Departamento de Matem\'aticas\\Universidad Aut\'onoma Metropolitana - Iztapalapa\\ \text{erc@xanum.uam.mx}}
\date{}
\newtheorem{lemma}{Lemma}
\newtheorem{theorem}{Theorem}
\newtheorem{corollary}{Collorary}
\begin{document}

\maketitle

\begin{abstract}
    For a $2$-connected graph $G$ and vertices $u,v$ of $G$ we define an abstract graph $\mathcal{P}(G_{uv})$ whose vertices are the paths joining $u$ and $v$ in $G$, where paths $S$ and $T$ are adjacent if $T$ is obtained from $S$ by replacing a subpath $S_{xy}$ of $S$ with an internally disjoint subpath $T_{xy}$ of $T$. We prove that $\mathcal{P}(G_{uv})$ is always connected and give a necessary and a sufficient condition for connectedness in cases where the cycles formed by the replacing subpaths are restricted to a specific family of cycles of $G$. 
\end{abstract}

\section{Introduction}

 For any vertices $x,y$ of a path $L$, we denote by $L_{xy}$ the subpath of $L$ that joins $x$ and $y$. Let $G$ be a $2$-connected graph and $u$ and $v$ be vertices of $G$. The \emph{$uv$ path graph of $G$} is the graph $\mathcal{P}(G_{uv})$ whose vertices are the paths joining $u$ and $v$ in $G$, where two paths $S$ and $T$ are adjacent if $T$ is obtained from $S$ by replacing a subpath $S_{xy}$ of $S$ with an internally disjoint subpath $T_{xy}$ of $T$. The $uv$ path graph $\mathcal{P}(G_{uv})$ is closely related to the graph $G(P,f)$ of $f$-monotone paths on a polytope $P$  (see C. A. Athanasiadis \emph{et al} \cite{ALZ,AER}), whose vertices are the $f$-monotone paths on $P$ and where two paths  $S$ and $T$ are adjacent if there is a $2$-dimensional face $F$ of $P$ such that $T$ is obtained from $S$ by replacing an $f$-monotone subpath of $S$ contained in $F$ with the complementary $f$-monotone subpath of $T$ contained in $F$. 

In Section \ref{Prem} we show that the graphs $\mathcal{P}(G_{uv})$ are always connected as is the case for the graphs $G(P,f)$. 

If $S$ and $T$ are adjacent paths in a $uv$-path graph $\mathcal{P}(G_{uv})$, then $S\cup T$ is a subgraph of $G$ consisting of a unique cycle $\sigma$ joined to $u$ and $v$ by disjoint paths $P_{u}$ and $P_{v}$.  See Figure \ref{figmonocle}.

\begin{figure}[h!]
\centering
  \includegraphics[width= 4in]{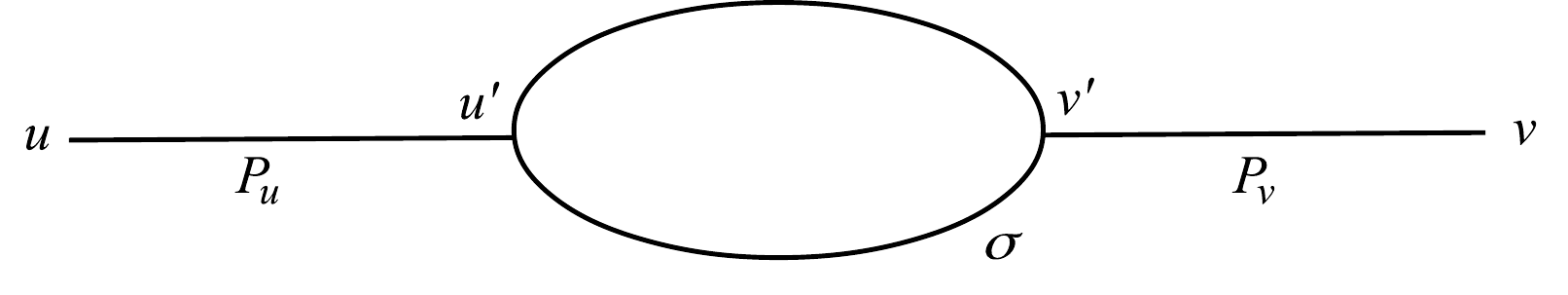}
  \caption{$S \cup T$}
  \label{figmonocle}
\end{figure}

Let $\mathcal{C}$ be a set of of cycles of $G$; the \emph{$uv$-path graph of $G$ defined by $\mathcal{C}$} is the spanning subgraph $\mathcal{P}_\mathcal{C}(G_{uv})$ of $\mathcal{P}(G_{uv})$ where two paths $S$ and $T$ are adjacent if and only if the unique cycle $\sigma$ which is contained in $S\cup T$ lies in $\mathcal{C}$. 
A graph $\mathcal{P}_\mathcal{C}(G_{uv})$ may be disconnected.

The $uv$ path graph $\mathcal{P}(G_{uv})$ is also related to the well-known \emph{tree graph} $\mathcal{T}(G)$ of a connected graph $G$, studied by R. L. Cummins \cite{C66}, in which the vertices are the spanning trees of $G$ and the edges correspond to pairs of trees $S$ and $R$ which are obtained from each other by a single edge exchange. As in the $uv$ path graph, if two trees $S$ and $R$ are adjacent in $\mathcal{T}(G)$, then $S \cup R$ is a subgraph of $G$ containing a unique cycle. X. Li \textit{et al}  \cite{LNR} define, in an analogous way, a subgraph $\mathcal{T}_\mathcal{C}(G)$ of $\mathcal{T}(G)$ for a set of cycles $\mathcal{C}$ of $G$ and give a necessary condition and a sufficient condition for $\mathcal{T}_\mathcal{C}(G)$ to be connected. In sections \ref{sectionnecessary} and \ref{sectionsufficient} we show that the same conditions apply to $uv$ path graphs $\mathcal{P}_\mathcal{C}(G_{uv})$.

Similar results are obtained by A. P. Figueroa \textit{et al} \cite{FFR} with respect to the \emph{perfect matching graph} $\mathcal{M}(G)$ of a graph $G$ where the vertices are the perfect matchings of $G$ and in which two matchings $L$ and $N$ are ajacent if their symmetric difference is a cycle of $G$. Again, if $L$ and $N$ are adjacent matchings in $\mathcal{M}(G)$, then $L\cup M$ contains a unique cycle of $G$. 

For any subgraphs $F$ and $H$ of a graph $G$, we denote by $F \Delta H$ the subgraph of $G$ induced by the set of edges $(E(F) \setminus E(H)) \cup (E(H) \setminus E(F))$.

\section{Preliminary results}
\label{Prem}

In this section we prove that the $uv$ path graph is  connected for any $2$-connected graph $G$ and give an upper bound for the diameter of a graph $\mathcal{P}(G_{uv})$.

\begin{theorem}
\label{connected}
Let $G$ be a $2$-connected graph. The $uv$-path graph $\mathcal{P}(G_{uv})$ is connected for every pair of vertices $u,v$ of $G$.  
\end{theorem}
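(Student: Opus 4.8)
The plan is to prove connectedness by exhibiting, for an arbitrary $uv$-path $S$, a sequence of adjacent paths in $\mathcal{P}(G_{uv})$ that transforms $S$ into one fixed reference path $P_0$; since adjacency is symmetric, this shows every vertex lies in the same component as $P_0$. First I would fix a reference path $P_0$ (for instance any shortest $uv$-path, or one chosen by a fixed edge ordering), and then set up an induction or a descent argument on a suitable nonnegative measure of how far a given path $S$ is from $P_0$. Natural candidates for this measure are the number of edges in the symmetric difference $S \Delta P_0$, or the number of edges of $S$ not on $P_0$; the goal is to show that whenever $S \neq P_0$ we can find a single path-replacement move producing a neighbour $T$ of $S$ with strictly smaller measure.

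The key geometric step uses the structure noted in the excerpt: two paths are adjacent precisely when their union is a single cycle $\sigma$ with two disjoint pendant paths $P_u, P_v$ attaching it to $u$ and $v$. So to build a useful move from $S$, I would locate a maximal subpath $S_{xy}$ of $S$ that leaves $P_0$ and later returns to it, meaning $x,y$ are vertices on both $S$ and $P_0$ while the interior of $S_{xy}$ avoids $P_0$. Replacing $S_{xy}$ by the corresponding subpath $(P_0)_{xy}$ of $P_0$ is a legal adjacency move provided $S_{xy}$ and $(P_0)_{xy}$ are internally disjoint, and it strictly reduces the number of off-$P_0$ edges. I expect the main obstacle to be precisely this internal-disjointness requirement: the subpath $(P_0)_{xy}$ might share interior vertices with the rest of $S$ (not just with $S_{xy}$), so the naive replacement need not yield a simple $uv$-path, and the union need not be a single clean cycle. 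Handling this is where the real work lies.

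To overcome that obstacle I would choose $x$ and $y$ cleverly rather than taking the first and last common vertices. The cleanest approach is to take $x,y$ to be \emph{consecutive} intersection vertices of $S$ with $P_0$ in the order they occur along $S$, so that the detour $S_{xy}$ is minimal; then argue that one can further pick a portion of $(P_0)_{xy}$ that avoids the interior of $S$. If even that fails, the fallback is to route through $2$-connectivity directly: by Menger's theorem the detour $S_{xy}$ together with a suitable arc of $P_0$ or an independent path guaranteed by $2$-connectedness bounds a region that can be "peeled off" one cycle at a time. An alternative framing that sidesteps some of the case analysis is to induct on $|E(G)|$ or on the length of $S$, reducing to a smaller $2$-connected graph after contracting or deleting an edge once $S$ and $P_0$ agree on an initial segment. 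I would present the consecutive-intersection argument as the main line and invoke $2$-connectivity to supply the disjoint alternative whenever the direct swap is blocked.

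Finally, once the descent step is established, connectedness follows immediately: repeated application drives the measure to zero, at which point $S = P_0$, so $S$ and $P_0$ lie in the same component of $\mathcal{P}(G_{uv})$, and since $S$ was arbitrary the graph is connected. As a byproduct the number of moves used in the descent gives an upper bound on the distance from any path to $P_0$, and hence via the triangle inequality a bound on the diameter of $\mathcal{P}(G_{uv})$, matching the remark in the excerpt that such a bound is intended.
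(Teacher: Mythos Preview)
Your overall strategy---fix a target path and descend by replacing detours---matches the paper's approach. The paper phrases it as a maximality argument: assume $S$ and $T$ lie in different components of $\mathcal{P}(G_{uv})$ with the length $n^*$ of their common \emph{initial} segment as large as possible, then exhibit a neighbour $S'$ of $S$ with $n(S',T) > n^*$, a contradiction. Using the common initial segment as the measure is what keeps the argument clean.

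The gap in your plan is precisely the choice of $x$ and $y$. Taking them to be consecutive intersection vertices \emph{along $S$} ensures $S_{xy}$ is internally disjoint from $P_0$, but---as you yourself anticipate---says nothing about $(P_0)_{xy}$ meeting the rest of $S$, so the swap need not produce a simple path. The paper's fix reverses the roles: from the first divergence vertex $x = x_{n^*} = y_{n^*}$, follow $P_0$ (the paper's $T$) until it \emph{first} re-enters $S$, say at $y = y_{n^*+k} = x_m$. By construction this $P_0$-segment is internally disjoint from $S$, so replacing $S_{xy}$ with it is a legal adjacency move, and the resulting path shares at least one more initial edge with $P_0$. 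No appeal to $2$-connectivity or Menger is needed anywhere; your fallbacks become unnecessary once you choose the endpoints along $P_0$ rather than along $S$. As you note at the end, the argument also yields the diameter bound: each step extends the common initial segment by at least one edge, so at most $\ell(P_0)$ moves reach $P_0$, giving the paper's bound of $2d_G(u,v)$.
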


\begin{proof}

For any different $uv$ paths $Q$ and $R$ in $G$ denote by $n(Q,R)$ the number of consecutive initial edges $Q$ and $R$ have in common. Assume the result is false and choose two $uv$ paths $S: u = x_0, x_1, \ldots, x_s = v$ and $T: u = y_0, y_1, \ldots, y_t = v$ in different components of $\mathcal{P}(G_{uv})$ for which $n^{*} = n(S,T)$ is maximum.

Since edges $x_{n^{*}}x_{n^{*}+1}$ and $y_{n^{*}}y_{n^{*}+1}$ are not equal,  $x_{n^{*}+1} \neq y_{n^{*}+1}$. Let $j = min \{i: x_{n^{*}+i} \in V(T)\}$ and $k = min \{i: y_{n^{*}+i} \in V(S)\}$ and let $l$ and $m$ be integers such that $y_{l} = x_{n^{*}+j} $, $x_m = y_{n^{*}+k}$. Consider the path:

$$S': u = x_0, x_1, \ldots, x_{n^*}, y_{n^* +1}, y_{n^* +2}, \ldots, y_{n^*+k}, x_{m+1},x_{m+2},  \ldots , x_s = v$$

Paths $S$ and $S'$ are adjacent in $\mathcal{P}(G_{uv})$ since $S'$ is obtained from $S$ by replacing the subpath $x_{n^{*}}, x_{n^{*}+1}, \ldots, x_{m}$ of $S$ with the subpath $y_{n^{*}},  y_{n^{*}+1}, \ldots, \allowbreak y_{n^{*}+k}$ of $S'$. Notice that $n(S', T) \geq n(S, T) + 1$ since $x_0x_1, x_1x_2, \ldots, x_{n^{*}-1}x_{n^{*}}, \allowbreak x_{n^{*}}y_{n^{*}+1} \in E(S') \cap E(T)$. By the choice of $S$, and $T$, paths $S'$ and $T$ are connected in $P(G_{uv})$. This implies that $S$ and $T$ are also connected in $\mathcal{P}(G_{uv})$ which is a contradiction.

\end{proof}
For any two vertices $u$ and $v$ of a connected graph $G$ we denote by $d_G(u,v)$ the distance between $u$ and $v$ in $G$, that is the length of a shortest $uv$ path in $G$. The diameter of a connected graph $G$ is the maximum distance among pairs of vertices of $G$. For a path $P$, we denote by $l(P)$ the length of $P$.

\begin{theorem}
\label{diameter}
Let $u$ and $v$ be vertices of a $2$-connected  graph $G$. The diameter of the graph  $\mathcal{P}(G_{uv})$ is at most $2d_G(u,v)$.
\end{theorem}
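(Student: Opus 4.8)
The plan is to reduce the diameter bound to a single statement: every $uv$-path lies within distance $d_G(u,v)$ of one fixed shortest $uv$-path, after which the triangle inequality in $\mathcal{P}(G_{uv})$ finishes the argument. Concretely, fix a shortest $uv$-path $W$, so that $l(W) = d_G(u,v)$, and write $d = d_G(u,v)$.

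The central claim is that for every $uv$-path $S$ one has $d_{\mathcal{P}(G_{uv})}(S, W) \le d$. To prove it I would reuse the move constructed in the proof of Theorem \ref{connected}, but always taking $W$ as the target. As there, for a path $Q \neq W$ let $n(Q, W)$ denote the number of consecutive initial edges that $Q$ and $W$ share; since $Q \neq W$ and both are $uv$-paths, $n(Q,W) < l(W)$. Starting from $S$ and applying the construction of that proof to the pair $(S, W)$ produces a neighbour $S'$ of $S$ in $\mathcal{P}(G_{uv})$ with $n(S', W) \ge n(S, W) + 1$. Iterating, after at most $l(W) - n(S,W) \le l(W) = d$ steps we reach a path $Q$ with $n(Q, W) = l(W)$; such a $Q$ shares all $l(W)$ edges of $W$ as its initial segment and hence arrives at $v$ after $l(W)$ edges, forcing $Q = W$. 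This exhibits a walk of length at most $d$ from $S$ to $W$ in $\mathcal{P}(G_{uv})$, proving the claim.

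With the claim in hand the theorem is immediate: for any two $uv$-paths $S$ and $T$, applying the claim to each gives $d_{\mathcal{P}(G_{uv})}(S, W) \le d$ and $d_{\mathcal{P}(G_{uv})}(T, W) \le d$, whence by the triangle inequality $d_{\mathcal{P}(G_{uv})}(S, T) \le d_{\mathcal{P}(G_{uv})}(S, W) + d_{\mathcal{P}(G_{uv})}(W, T) \le 2d = 2 d_G(u,v)$. Taking the maximum over all pairs $S, T$ bounds the diameter of $\mathcal{P}(G_{uv})$ by $2 d_G(u,v)$.

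I expect the only delicate point to be the step count of the iteration: one must check that each application of the Theorem \ref{connected} move strictly increases the initial agreement with $W$, so that at most $l(W)$ moves are used, and that maximal agreement forces equality with $W$. Both facts are exactly the mechanism already verified in the proof of Theorem \ref{connected}, specialized to the shortest-path target $W$, so no genuinely new construction is needed; the $2$-connectivity of $G$, which guarantees that the required internally disjoint replacing subpaths exist, is inherited from that proof as well.
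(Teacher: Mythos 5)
Your proposal is correct and takes essentially the same approach as the paper: the paper's proof also fixes a shortest $uv$-path $P$ and invokes the mechanism from the proof of Theorem \ref{connected} to connect each of $S$ and $T$ to $P$ by a path of length at most $l(P)$ in $\mathcal{P}(G_{uv})$, then concatenates. You simply make explicit the step-counting argument (agreement with $P$ increases by at least one per move, and full agreement forces equality) that the paper leaves implicit.
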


\begin{proof}
 Let $S$ and $T$ be $uv$ paths in $G$ and let $P$ be a shortest $uv$ path in $G$. From the proof of Theorem \ref{connected} one can see that there are two paths $Q_S$  and $Q_T$ in $\mathcal{P}(G_{uv})$, each with length at most $l(P)$, joining $S$ to $P$ and $T$ to $P$, respectively. Clearly $Q_S \cup Q_T$ contains a path  joining $S$ and $T$ in $\mathcal{P}(G_{uv})$ with length at most $2l(P)=2d_G(u,v)$. 
\end{proof}

In Figure \ref{figtight} we show a $2$-connected graph $G^2$ and paths $S$ and $T$ joining vertices $u$ and $v$ of $G^2$ such that $d_{G^2}(u,v) = 2$ and $d_{\mathcal{P}(G^2_{uv})}(S,T) = 4$. For any positive integer $k > 2$ the graph $G^2$ can be extended to a graph $G^k$ such that $d_{G^k}(u,v) = k$ and that the diameter of $\mathcal{P}(G^k_{uv}))$ is $2k$. This shows that Theorem \ref{diameter} is tight.

\begin{figure}[h!]
\centering
  \includegraphics[width= 4in]{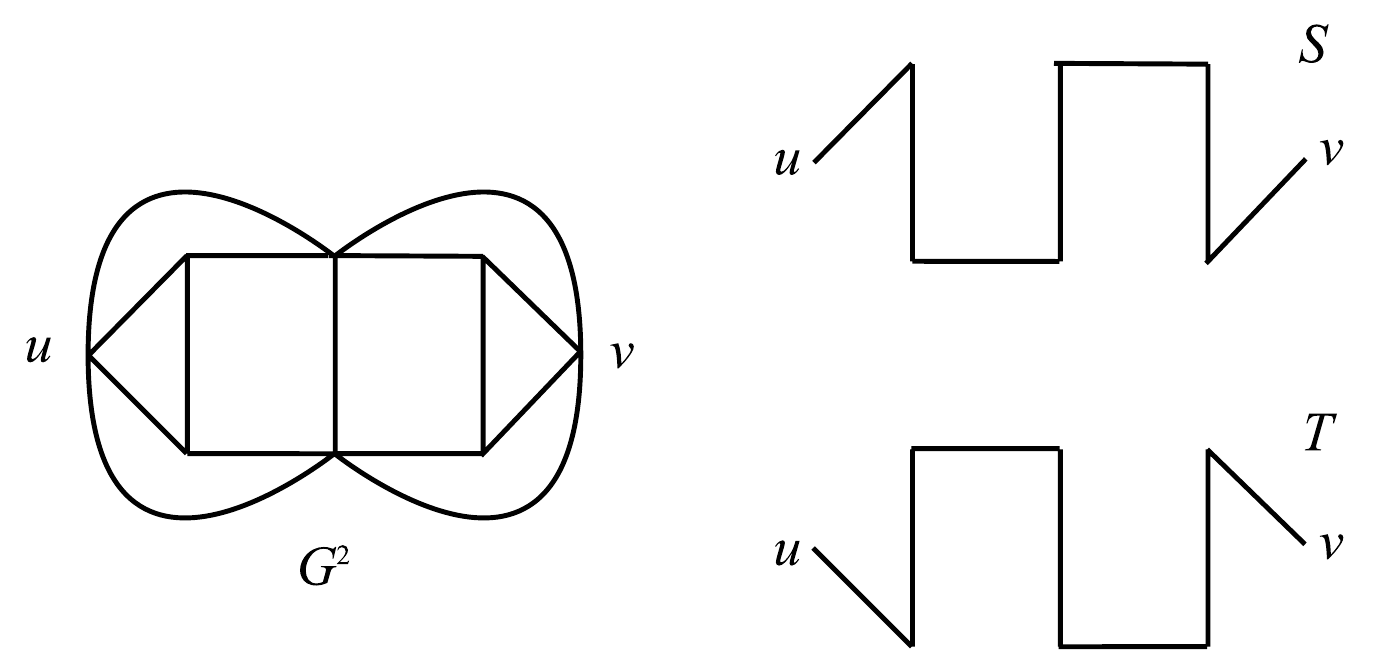}
  \caption{Graph $G^2$ and paths $S$ and $T$.}
  \label{figtight}
\end{figure}

\section{Necessary condition}
\label{sectionnecessary}

Let $u$ and $v$ be vertices of a $2$-connected graph $G$ and $S$ and $T$ be two $uv$ paths adjacent in $\mathcal{P}(G_{uv})$. Since $T$ is obtained from $S$ by replacing a subpath $S_{xy}$ of $S$ with an internally disjoint subpath $T_{xy}$ of $T$, the graph $S \Delta T$ is the cycle $S_{xy} \cup T_{xy}$.

\begin{theorem}
\label{necessary}
Let $G$ be a $2$-connected graph, $u$ and $v$ be vertices of $G$ and $\mathcal{C}$ be a set of cycles of $G$. If the graph $\mathcal{P}_\mathcal{C}(G_{uv})$ is connected, then $\mathcal{C}$ spans the cycle space of $G$.
\end{theorem}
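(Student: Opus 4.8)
The plan is to work in the cycle space of $G$ over $GF(2)$, where addition is the symmetric difference $\Delta$ of edge sets, and to prove the implication directly: assuming $\mathcal{P}_\mathcal{C}(G_{uv})$ is connected, I would show that every cycle of $G$ lies in the subspace $\langle\mathcal{C}\rangle$ generated by $\mathcal{C}$. Since the cycles of $G$ generate its entire cycle space, this suffices.

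First I would establish a telescoping identity. Recall from the remark preceding the statement that whenever two $uv$-paths $S$ and $T$ are adjacent in $\mathcal{P}_\mathcal{C}(G_{uv})$, the graph $S\Delta T$ is the unique cycle contained in $S\cup T$, and by definition of $\mathcal{P}_\mathcal{C}(G_{uv})$ this cycle lies in $\mathcal{C}$. Now take \emph{arbitrary} $uv$-paths $S$ and $T$. By connectedness there is a walk $S=P_0,P_1,\ldots,P_k=T$ in $\mathcal{P}_\mathcal{C}(G_{uv})$ with each consecutive pair adjacent, so $P_{i-1}\Delta P_i\in\mathcal{C}$. Adding these over $GF(2)$, the interior terms cancel and we obtain
$$S\Delta T=\sum_{i=1}^{k}\left(P_{i-1}\Delta P_i\right)\in\langle\mathcal{C}\rangle.$$
Thus $S\Delta T\in\langle\mathcal{C}\rangle$ for \emph{every} pair of $uv$-paths.

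The core of the argument is then to realize each cycle of $G$ as such a symmetric difference. Given a cycle $C$, I would form $G^{+}$ by adding a new vertex $w$ adjacent to $u$ and to $v$; a short check shows $G^{+}$ is again $2$-connected. Applying the fan lemma (a consequence of Menger's theorem) to $w$ and the vertex set $V(C)$, which has at least $3\geq 2$ vertices, yields two paths from $w$ to $C$ that are disjoint except at $w$ and meet $C$ only at their endpoints. Since $w$ has only the neighbours $u$ and $v$, these paths have the forms $w,u,\ldots,x$ and $w,v,\ldots,y$ with $x\neq y$ on $C$; deleting $w$ leaves disjoint paths $P_u$ from $u$ to $x$ and $P_v$ from $v$ to $y$, each internally disjoint from $C$. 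Writing $C=A_1\cup A_2$ for the two $xy$-arcs of $C$, the walks $S=P_u\cup A_1\cup P_v$ and $T=P_u\cup A_2\cup P_v$ are genuine $uv$-paths, and $S\Delta T=A_1\Delta A_2=C$. Combined with the previous paragraph, $C\in\langle\mathcal{C}\rangle$.

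I expect the realization step to be the main obstacle: the whole point is to route from $u$ and from $v$ to two \emph{distinct} vertices of $C$ along \emph{disjoint} paths, which is exactly what can fail without $2$-connectivity. The added vertex $w$ converts ``one path from $u$ and one path from $v$'' into ``a $2$-fan from $w$'', which is precisely the situation the fan lemma governs; the degenerate cases where $u$ or $v$ already lies on $C$ are absorbed automatically, since then the corresponding fan path has length $1$ and $P_u$ or $P_v$ is trivial. Once every cycle of $G$ is shown to lie in $\langle\mathcal{C}\rangle$, the conclusion that $\mathcal{C}$ spans the cycle space is immediate.
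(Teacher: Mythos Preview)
Your proposal is correct and follows essentially the same route as the paper: realize an arbitrary cycle $\sigma$ as $S\Delta T$ for two $uv$-paths $S,T$, then telescope along a walk in $\mathcal{P}_\mathcal{C}(G_{uv})$ to write $\sigma$ as a $\Delta$-sum of cycles in $\mathcal{C}$. The only difference is cosmetic: where the paper simply asserts that $2$-connectivity yields disjoint paths $P_u,P_v$ from $u$ and $v$ to $\sigma$, you supply an explicit justification via the auxiliary vertex $w$ and the fan lemma, which is one standard way to prove that very fact.
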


\begin{proof}

Let $\sigma$ be a cycle of $G$. Since $G$ is $2$-connected, there are two disjoint paths $P_u$ and $P_v$ joining, respectively, $u$ and $v$ to $\sigma$. Denote by $u'$ and $v'$ the unique vertices of $P_u$ and $P_v$, respectively, that lie in $\sigma$. Vertices $u'$ and $v'$ partition cycle $\sigma$ into two internally disjoint paths $Q$ and $R$. Let $S = P_u \cup Q\cup P_v$ and $T = P_u \cup R \cup P_v$. Clearly $S$ and $T$ are two different $uv$ paths in $G$ such that $S \Delta T = \sigma$. 

Since $\mathcal{P}_\mathcal{C}(G_{uv})$ is connected, there are $uv$ paths $S = W_0, W_1, \ldots, W_k = T$ such that for $i=1, 2, \ldots k$, paths $W_{i-1}$ and $W_i$ are adjacent in $\mathcal{P}_\mathcal{C}(G_{uv})$. For $i=1, 2, \ldots k$ let $\alpha_i = W_{i-1} \Delta W_i$. Then $\alpha_1, \alpha_2, \ldots, \alpha_k$ are cycles in $\mathcal{C}$ such that:
$$\alpha_1 \Delta \alpha_2 \Delta \cdots \Delta \alpha_k = (W_0 \Delta W_1) \Delta (W_1 \Delta W_2) \Delta \cdots \Delta (W_{k-1} \Delta W_k ) = W_0 \Delta W_k = \sigma$$
Therefore $\mathcal{C}$ spans $\sigma$.

\end{proof}

Let $G$ be a complete graph with four vertices $u, x, y ,v$ and let $\mathcal{C} = \{\alpha, \beta, \delta \}$, where $\alpha = uxv$, $\beta = uyv$ and $\delta = uxyv$. Set $\mathcal{C}$ spans the cycle space of $G$ but the graph $\mathcal{P}_\mathcal{C}(G_{uv})$ is not connected since the $uv$ path $uyxv$ is an isolated vertex of $\mathcal{P}_\mathcal{C}(G_{uv})$, see Fig \ref{figK4}. This shows that the condition in Theorem \ref{necessary} is  not sufficient for $P_\mathcal{C}(G_{uv})$ to be connected.

\begin{figure}[h!]
\centering
  \includegraphics[width= 4in]{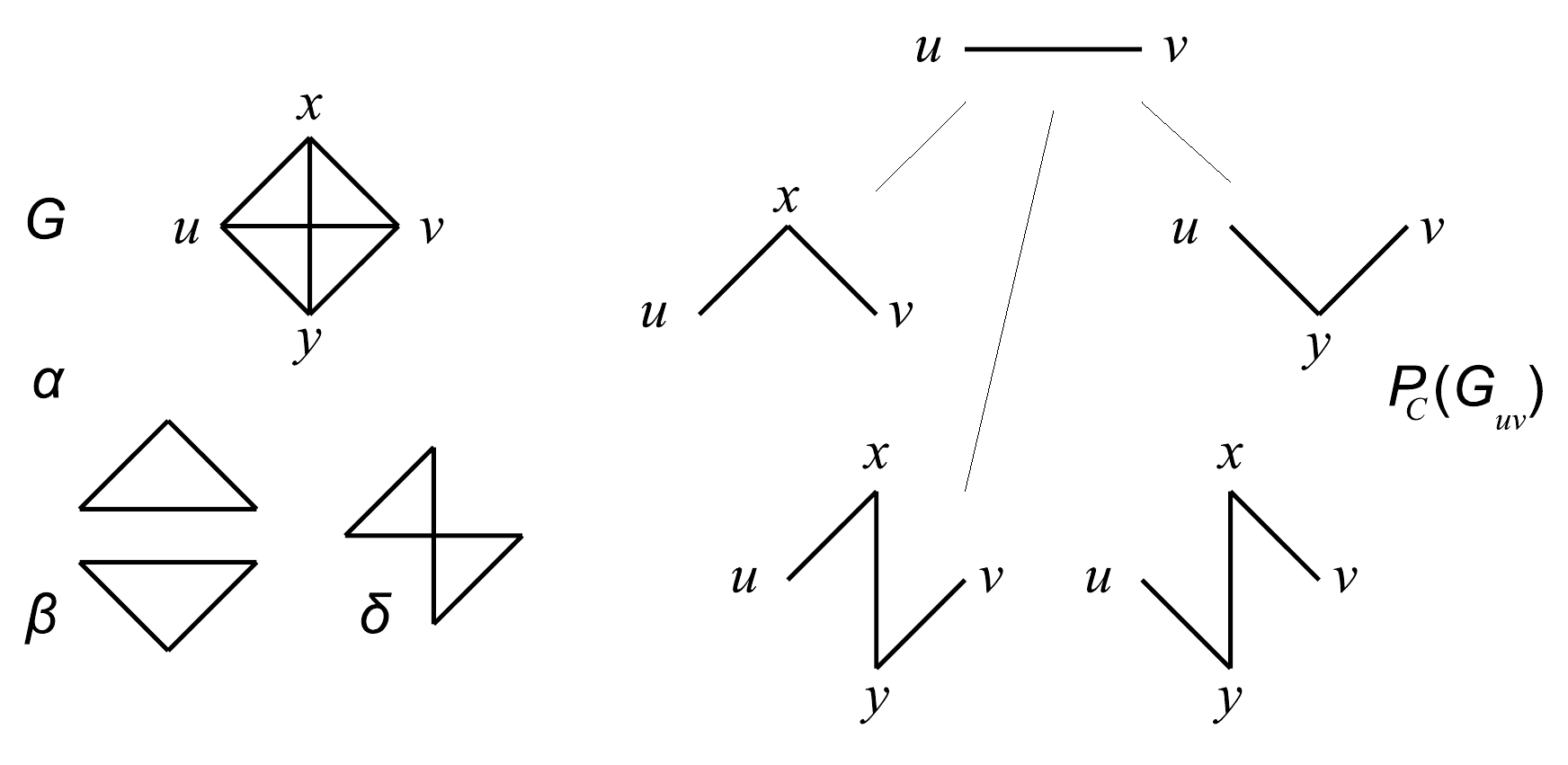}
  \caption{Graph $G$, set $\mathcal{C} = \{\alpha, \beta, \delta\}$ and graph $\mathcal{P}_C(G_{uv})$.}
  \label{figK4}
\end{figure}

\section{Sufficient condition}
\label{sectionsufficient}

A \emph{unicycle} of a connected graph $G$ is a spanning subgraph $\mathcal{U}$ of $G$ that contains a unique cycle. Let $u$ and $v$ be vertices of a $2$-connected graph $G$. A $uv$-\emph{monocle} of $G$ is a subgraph of $G$ that consists of a cycle $\sigma$ and two disjoint paths $P_u$ and $P_v$ that join, respectively $u$ and $v$ to $\sigma$, see Fig. \ref{figmonocle}. Clearly for each $uv$-monocle $\mathcal{M}$ of a $2$-connected graph $G$, there is a unicycle $\mathcal{U}$ of $G$ that contains $\mathcal{M}$.

Let $\mathcal{C}$ be a set of cycles of $G$. A cycle $\sigma$ of $G$ has \emph{Property $\Delta^*$} with respect to $\mathcal{C}$ if for every unicycle $\mathcal{U}$ containing $\sigma$ there is an edge $e$ of $G$, not in $\mathcal{U}$ and two cycles  $\alpha, \beta \in \mathcal{C}$, contained in $\mathcal{U} + e$, such that $\sigma = \alpha \Delta \beta$.

\begin{lemma}
\label{principal}
Let $G$ be a $2$-connected graph and $u$ and $v$ be vertices of $G$. Also let $\mathcal{C}$ be a set of cycles of $G$ and $\sigma$ be a cycle having Property $\Delta^*$  with respect to $\mathcal{C}$. The graph $\mathcal{P}_{\mathcal{C \cup \{\sigma\}}}(G_{uv})$ is connected if and only if $\mathcal{P}_{\mathcal{C}}(G_{uv})$ is connected. 
\end{lemma}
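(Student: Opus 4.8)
The plan is to prove the two implications separately, the forward one being immediate and the converse carrying all the content. For the easy direction, note that $\mathcal{P}_{\mathcal{C}}(G_{uv})$ and $\mathcal{P}_{\mathcal{C}\cup\{\sigma\}}(G_{uv})$ have the same vertex set, namely all $uv$-paths of $G$, and since $\mathcal{C}\subseteq\mathcal{C}\cup\{\sigma\}$ every edge of the former is an edge of the latter; thus $\mathcal{P}_{\mathcal{C}}(G_{uv})$ is a spanning subgraph of $\mathcal{P}_{\mathcal{C}\cup\{\sigma\}}(G_{uv})$, so connectedness of $\mathcal{P}_{\mathcal{C}}(G_{uv})$ immediately gives connectedness of $\mathcal{P}_{\mathcal{C}\cup\{\sigma\}}(G_{uv})$. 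For the converse I would assume $\mathcal{P}_{\mathcal{C}\cup\{\sigma\}}(G_{uv})$ connected, discard the trivial case $\sigma\in\mathcal{C}$, and reduce to a local replacement statement: it suffices to show that whenever two $uv$-paths $S$ and $T$ satisfy $S\Delta T=\sigma$ they already lie in the same component of $\mathcal{P}_{\mathcal{C}}(G_{uv})$. Given this, each $\sigma$-edge of a connecting walk in $\mathcal{P}_{\mathcal{C}\cup\{\sigma\}}(G_{uv})$ can be replaced by a walk in $\mathcal{P}_{\mathcal{C}}(G_{uv})$, proving the latter connected.

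So I would fix $S$ and $T$ with $S\Delta T=\sigma$. Then $\mathcal{M}=S\cup T$ is a $uv$-monocle whose unique cycle is $\sigma$, and I would extend it to a unicycle $\mathcal{U}$ of $G$; since $\mathcal{M}$ already contains the cycle $\sigma$ and $\mathcal{U}$ has a unique cycle, that cycle is $\sigma$. Property $\Delta^*$ then supplies an edge $e\notin\mathcal{U}$ and cycles $\alpha,\beta\in\mathcal{C}$ contained in $\mathcal{U}+e$ with $\sigma=\alpha\Delta\beta$; because $\sigma\subseteq\mathcal{U}$ avoids $e$ while $\sigma$ is the only cycle of $\mathcal{U}$, both $\alpha$ and $\beta$ must use $e$, and they are distinct from each other and from $\sigma$. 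I would set $M=S\Delta\alpha$ and compute $S\Delta M=\alpha$ and $M\Delta T=(S\Delta T)\Delta\alpha=\sigma\Delta\alpha=\beta$. Provided $M$ is a genuine $uv$-path, $S$ and $M$ are adjacent in $\mathcal{P}_{\mathcal{C}}(G_{uv})$ through $\alpha\in\mathcal{C}$ and $M$ and $T$ are adjacent through $\beta\in\mathcal{C}$, so the walk $S,M,T$ connects $S$ and $T$, as required.

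The crux, and the step I expect to be the main obstacle, is verifying that $M=S\Delta\alpha$ (or, if necessary, $S\Delta\beta$) is a simple $uv$-path rather than a path together with spurious cycles. Here I would exploit the structure of $\mathcal{U}+e$: it is connected with cycle rank $2$ and already exhibits the three distinct cycles $\sigma,\alpha,\beta$, hence it is a subdivision of a theta graph and these are its only cycles. Writing $\pi_1=\alpha\cap\beta$ for the arc carrying $e$ and letting $\pi_2,\pi_3$ be the two arcs into which the endpoints of $\pi_1$ split $\sigma$, one has $\alpha=\pi_1\cup\pi_2$, $\beta=\pi_1\cup\pi_3$ and $\sigma=\pi_2\cup\pi_3$. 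Since $M$ has odd degree exactly at $u$ and $v$, any failure to be a path would force $M$ to contain one of $\sigma,\alpha,\beta$ as a subgraph. I would rule this out by analyzing the cyclic positions on $\sigma$ of the endpoints of $\pi_1$ relative to the attachment vertices of the monocle paths $P_u,P_v$, using that $\mathcal{U}\setminus E(\sigma)$ is a forest to control the overlap of $\pi_1$ with $S$; the choice between $\alpha$ and $\beta$ is made precisely so that $\alpha\cap S$ is a single subpath of $S$, which is exactly the condition guaranteeing that $S\Delta\alpha$ is again a simple $uv$-path. This case analysis on the attachment vertices is where the real work lies.
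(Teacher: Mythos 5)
Your high-level strategy coincides with the paper's: replace each edge of $\mathcal{P}_{\mathcal{C}\cup\{\sigma\}}(G_{uv})$ whose symmetric difference is $\sigma$ by a walk of length two in $\mathcal{P}_{\mathcal{C}}(G_{uv})$, obtained from the monocle $\mathcal{M}=S\cup T$, a unicycle $\mathcal{U}\supseteq\mathcal{M}$, Property $\Delta^*$, and an intermediate path $Q$ with $S\Delta Q=\alpha$ and $Q\Delta T=\beta$. The gap is that you stop exactly where the paper's proof begins in earnest: the case analysis that exhibits $Q$ explicitly and verifies that \emph{both} exchanges are legitimate adjacencies is declared to be ``where the real work lies'' but is never carried out, and the shortcut you offer in its place is unsound. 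Adjacency in $\mathcal{P}(G_{uv})$ is strictly stronger than ``both are $uv$-paths and their symmetric difference is a single cycle of $\mathcal{C}$'': it requires replacing one subpath by an \emph{internally disjoint} subpath. Hence your claim that once $M=S\Delta\alpha$ is a genuine $uv$-path, $S$ is adjacent to $M$ via $\alpha$ and $M$ is adjacent to $T$ via $\beta$, is false as stated. Concretely, inside your own setting: let $\sigma$ be the triangle $u,a,b$, let $S=u,a,b,c,v$ and $T=u,b,c,v$ (adjacent, with $S\Delta T=\sigma$), let $\mathcal{U}=S\cup T$ and $e=ac$. The two cycles of $\mathcal{U}+e$ through $e$ are $\alpha_1$ with edges $\{ab,bc,ca\}$ and $\alpha_2$ with edges $\{ua,ub,bc,ca\}$, and $\alpha_1\Delta\alpha_2=\sigma$. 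Then $S\Delta\alpha_2$ is the perfectly good simple $uv$-path $u,b,a,c,v$, yet it is \emph{not} adjacent to $S$: the only candidate exchange replaces $u,a,b,c$ by $u,b,a,c$, and these share the internal vertices $a,b$. Only the choice $\alpha_1$ works here, and proving that a correct choice always exists and yields two genuine single-subpath replacements is precisely the content of the paper's Cases 1--4.

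Two further inaccuracies in the sketch. First, your closing criterion --- choose the labelling so that $\alpha\cap S$ is a single subpath of $S$ --- is necessary but is not ``exactly the condition'' guaranteeing that $S\Delta\alpha$ is a simple path: you also need the complementary arc of $\alpha$ to be internally disjoint from $S$ (if that arc passes through a vertex of $S$, that vertex has degree at least three in $S\Delta\alpha$; this can genuinely happen, e.g.\ when the arc carrying $e$ re-enters $\mathcal{M}$ through the attachment vertex $u'$ of $P_u$), and you need the symmetric verification that $\beta$ meets $T$ in a single subpath with internally disjoint complement, which your sketch never mentions even though the second adjacency is just as much in question as the first. Second, your reduction is stated for all pairs with $S\Delta T=\sigma$, but for such a pair $S\cup T$ need not be a monocle unless $S$ and $T$ are adjacent in $\mathcal{P}(G_{uv})$; the reduction should be (and in the paper is) restricted to adjacent pairs, which is all the connectivity argument requires.
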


\begin{proof}
If $\mathcal{P}_{\mathcal{C}}(G_{uv})$ is connected, then $\mathcal{P}_{\mathcal{C \cup \{\sigma\}}}(G_{uv})$ is connected since the former is a subgraph of the latter. 

Assume now $\mathcal{P}_{\mathcal{C \cup \{\sigma\}}}(G_{uv})$ is connected and let $S$ and $T$ be $uv$ paths in $G$ which are adjacent in $\mathcal{P}_{\mathcal{C \cup \{\sigma\}}}(G_{uv})$. We show next that $S$ and $T$ are connected in $\mathcal{P}_\mathcal{C}(G_{uv})$ by a path of length at most 2. 

If $\omega = S \Delta T \in \mathcal{C}$, then $S$ and $T$ are adjacent in $\mathcal{P}_\mathcal{C}(G_{uv})$. For the case $\omega = \sigma$ denote by $\mathcal{M}$ the $uv$-monocle given by $S \cup T$. 

Let $\mathcal{U}$ be a unicycle of $G$ containing $\mathcal{M}$. Since $\sigma$ has Property $\Delta^*$  with respect to $\mathcal{C}$, there exists an edge $e=xy$ of $G$, not in $\mathcal{U}$, and two cycles $\alpha, \beta \in \mathcal{C}$ contained in $\mathcal{U} + e $ such that $\sigma = \alpha \Delta \beta$. 

Let $x'$ and $y'$ denote the vertices in $\mathcal{M}$ which are closest in $\mathcal{U}$ to $x$ and $y$, respectively. Then there exists a path $R_{x'y'}$ in $G$, with edges in $E(\mathcal{U}+e) \setminus E(\mathcal{M})$ joining $x'$ and $y'$ and such that cycles $\alpha$ and $\beta$ are contained in $\mathcal{M} \cup R_{x'y'}$. We analyze several cases according to the location of $x'$ and $y'$ in $\mathcal{M}$.

Denote by $P_u$ and $P_v$ the unique paths, contained in $\mathcal{M}$, that join $u$ and $v$ to $\sigma$ and by $u'$ and $v'$ the vertices where $P_u$ and $P_v$, respectively, meet $\sigma$.\\

\noindent \emph{Case} 1.- $x' \in V(P_u)$, $y' \in V(P_v)$. Without loss of generality we assume $\alpha = S_{x'y'} \cup R_{y'x'}$ and $\beta = T_{x'y'} \cup R_{y'x'}$, see Fig. \ref{figcase1}.

\begin{figure}[h!]
\centering
  \includegraphics[width= 4.3in]{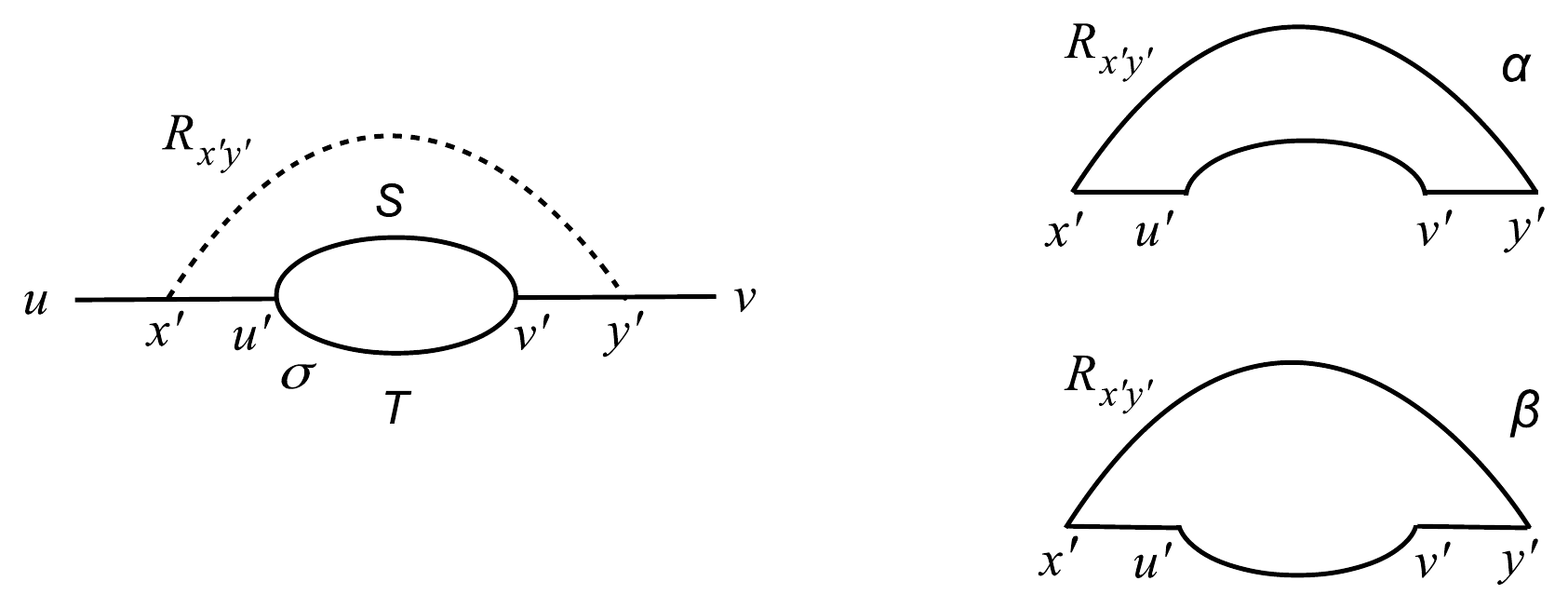}
  \caption{Left: $\mathcal{M} \cup R_{y'x'}$. Right: Cycles $\alpha$ and $\beta$.}
  \label{figcase1}
\end{figure}

Let $Q$ be the $uv$-path obtained from $S$ by replacing $S_{x'y'}$ with $R_{x'y'}$. Notice that $Q$ can also be obtained from $T$ by replacing $T_{x'y'}$ with $R_{x'y'}$.\\

\noindent \emph{Case} 2.- $x' \in V(P_u)$, $y' \in S \cap  \sigma$. Without loss of generality we assume $\alpha =  S_{x'y'} \cup R_{y'x'}$ and $\beta = T_{x'v'} \cup S_{v'y'} \cup R_{y'x'}$, see Fig. \ref{figcase2}.

\begin{figure}[h!]
\centering
  \includegraphics[width= 4.3in]{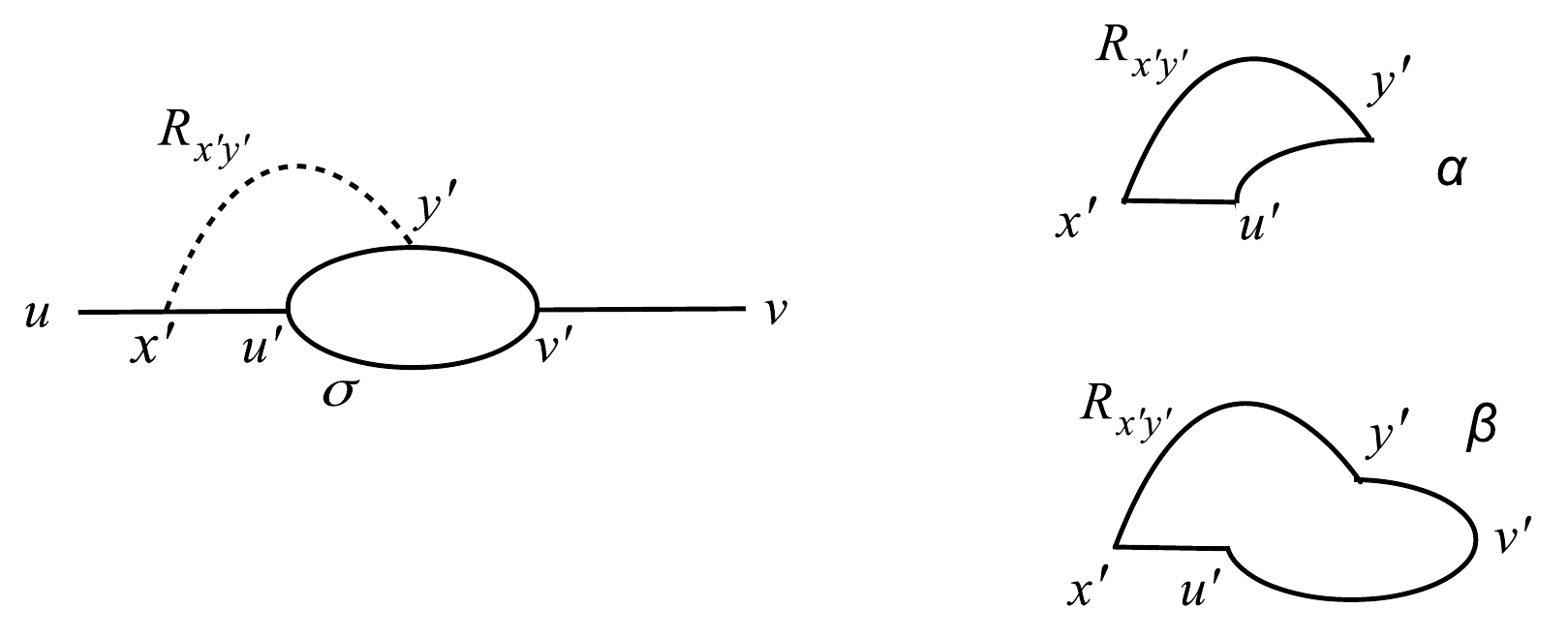}
 \caption{Left: $\mathcal{M} \cup R_{y'x'}$. Right: Cycles $\alpha$ and $\beta$.}
  \label{figcase2}
\end{figure}

Again let $Q$ be the $uv$-path obtained from $S$ by replacing $S_{x'y'}$ with $R_{x'y'}$. In this case, $Q$ can also be obtained from $T$ by replacing $T_{x'v'}$ with $R_{x'y'} \cup S_{y'v'}$.\\

\noindent \emph{Case} 3.- $x', y'  \in S \cap \sigma$. Without loss of generality we assume $\alpha = S_{x'y'} \cup R_{y'x'}$ and $\beta = S_{u'x'} \cup R_{x'y'} \cup S_{y'v'} \cup T_{v'u'}$ , see Fig. \ref{figcase3}.

\begin{figure}[h!]
\centering
  \includegraphics[width= 4.3in]{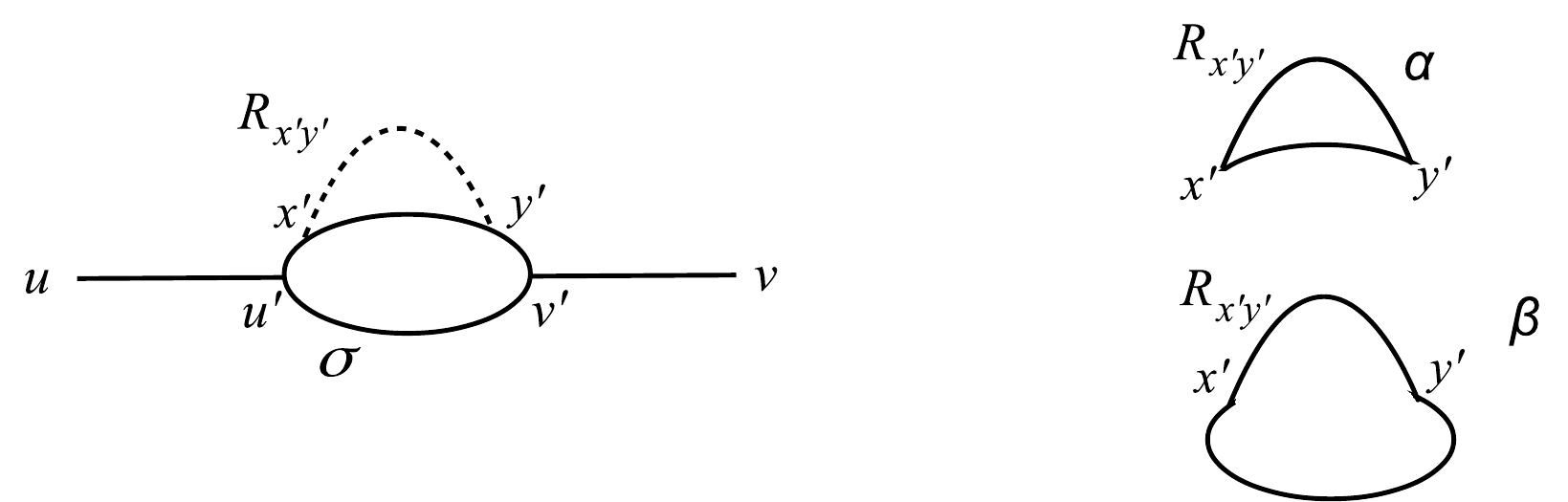}
\caption{Left: $\mathcal{M} \cup R_{y'x'}$. Right: Cycles $\alpha$ and $\beta$.}
  \label{figcase3}
\end{figure}

Let $Q$ be the $uv$-path obtained from $S$ by replacing $S_{x'y'}$ with $R_{x'y'}$. Path $Q$ is also obtained from $T$ by replacing $T_{u'v'}$ with $S_{u'x'} \cup R_{x'y'} \cup S_{y'v'}$.\\

\noindent \emph{Case} 4.- $x' \in S \cap \sigma$ and $y' \in T \cap \sigma$. Without loss of generality we assume $\alpha = S_{u'x'} \cup R_{x'y'} \cup T_{y'u'}$ and $\beta =  T_{y'v'} \cup S_{v'x'} \cup R_{x'y'} $. , see Fig. \ref{figcase4}.

\begin{figure}[h!]
\centering
  \includegraphics[width= 4.3in]{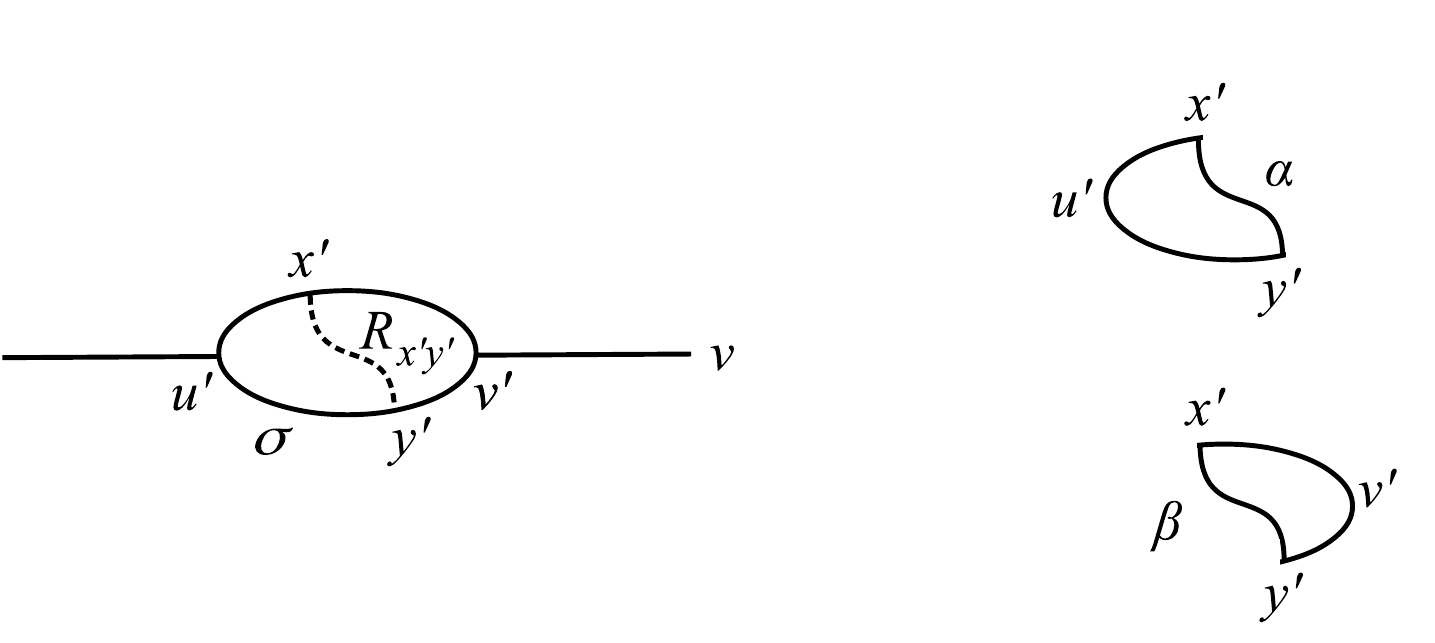}
 \caption{Left: $\mathcal{M} \cup R_{y'x'}$. Right: Cycles $\alpha$ and $\beta$.}
  \label{figcase4}
\end{figure}

Let $Q$ be the $uv$-path obtained from $S$ by replacing $S_{u'x'}$ with $T_{u'y'} \cup R_{y'x'}$. Now $Q$ can also be obtained from $T$ by replacing $T_{y'v'}$ with $R_{y',x'} \cup S_{x'v'}$\\

In each case $S \Delta Q = \alpha$ and $Q \Delta T = \beta$. Since $\alpha, \beta \in \mathcal{C}$, path $S$ is  adjacent to $Q$ and path $Q$ is adjacent to $T$ in $P_\mathcal{C}(G_{uv})$. Therefore $S$ and $T$ are connected in $\mathcal{P}_{\mathcal{C}}(G_{uv})$ by a path with length at most $2$. 

All remaining cases are analogous to either Case 2 or to Case 3. 
\end{proof}

Consider a $2$-connected graph $G$ with two specified vertices $u$ and $v$ and let $\mathcal{C}$ be a set of cycles of $G$. Construct a sequence of sets of cycles $\mathcal{C} = \mathcal{C}_0, \mathcal{C}_1, \ldots, \mathcal{C}_k$ as follows: If there is a cycle $\sigma_1$ not in $\mathcal{C}_0$  that has Property $\Delta^*$ with respect to $\mathcal{C}_0$ add $\sigma_1$ to $\mathcal{C}_0$ to obtain $\mathcal{C}_1$. At step $t$ add to $\mathcal{C}_t$ a new cycle $\sigma_{t+1}$ (if it exists) that has Property $\Delta
^*$ with respect to $\mathcal{C}_t$ to obtain $\mathcal{C}_{t+1}$. Stop at a step $k$ where there are no cycles, not in $\mathcal{C}_k$, having Property $\Delta^*$ with respect to $\mathcal{C}_k$. We denote by $Cl(\mathcal{C})$ the final set obtained with this process. Li \emph{et al} \cite{LNR} proved that the final set of cycles obtained is independent of which cycle $\sigma_t$ is added at each step in the case of multiple possibilities. 

A set of cycles of $G$ is \emph{$\Delta^*$-dense} if $Cl(\mathcal{C})$ is the whole set of cycles of $G$.

\begin{theorem}
\label{sufficient}
If $\mathcal{C}$ is $\Delta^*$-dense, then $\mathcal{P}_{\mathcal{C}}(G_{uv})$ is connected.
\end{theorem}

\begin{proof}

Since $\mathcal{C}$ is $\Delta^*$-dense, $Cl(\mathcal{C})$ is the set of cycles of $G$ and therefore $\mathcal{P}_{Cl(\mathcal{C})}(G_{uv}) = \mathcal{P}(G_{uv})$ which is connected by Theorem \ref{connected}.

Let $\mathcal{C}= \mathcal{C}_0 , \mathcal{C}_1, \ldots, \mathcal{C}_k = Cl(\mathcal{C})$ be a sequence of sets of cycles obtained from $\mathcal{C}$ as above. By Lemma \ref{principal}, all graphs $\mathcal{P}_{Cl(\mathcal{C})}(G_{uv}) = \mathcal{P}_{\mathcal{C}_{k}}(G_{uv}), \mathcal{P}_{\mathcal{C}_{k-1}}(G_{uv}), \allowbreak \ldots, \mathcal{P}_{\mathcal{C}_{0}}(G_{uv}) = \mathcal{P}_{\mathcal{C}}(G_{uv})$ are connected.
\end{proof}

Li \emph{et al} \cite{LNR} proved the following:

\begin{theorem}
\label{carasinternas}
If $G$ is a plane $2$-connected graph and $\mathcal{C}$ is the set of internal faces of $G$, then $\mathcal{C}$ is $\Delta^*$-dense. 
\end{theorem}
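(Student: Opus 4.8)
The plan is to prove, by strong induction on the number $m$ of internal faces of $G$ enclosed by a cycle, that \emph{every} cycle of $G$ belongs to $Cl(\mathcal{C})$. Since $\mathcal{C}$ is the set of internal faces, a cycle enclosing a single internal face \emph{is} that face and so lies in $\mathcal{C}\subseteq Cl(\mathcal{C})$, which settles the base case $m=1$. For the inductive step I would fix a cycle $\sigma$ enclosing $m\ge 2$ internal faces and show that $\sigma$ has Property $\Delta^{*}$ with respect to $Cl(\mathcal{C})$, using only witnessing cycles that enclose fewer than $m$ internal faces; by the induction hypothesis all such cycles already lie in $Cl(\mathcal{C})$, and since $Cl(\mathcal{C})$ is closed under the addition of cycles having Property $\Delta^{*}$, this forces $\sigma\in Cl(\mathcal{C})$.

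The substance is to verify Property $\Delta^{*}$ for \emph{every} unicycle $\mathcal{U}$ whose unique cycle is $\sigma$. First I would localise to the closed interior of $\sigma$: let $H$ be the plane subgraph of $G$ formed by the vertices and edges lying on or inside $\sigma$. Because an edge incident with an interior vertex cannot cross $\sigma$, a separating vertex of $H$ would separate $G$; hence $H$ is $2$-connected, its outer face is bounded by $\sigma$, and its $m$ internal faces are exactly the faces of $G$ enclosed by $\sigma$. The same planarity remark shows that the $\mathcal{U}$-path from each interior vertex to $\sigma$ stays inside $\sigma$, so $\mathcal{U}_H:=\mathcal{U}\cap H$ is a spanning unicycle of $H$ with cycle $\sigma$, and an interior edge lies in $\mathcal{U}$ if and only if it lies in $\mathcal{U}_H$.

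Next I would pass to the dual. Deleting one edge $\epsilon$ of $\sigma$ from $\mathcal{U}_H$ leaves a spanning tree $T$ of $H$ with exactly $m$ non-tree edges, namely $\epsilon$ together with $m-1$ interior edges; by tree--cotree duality their duals form a spanning tree $T^{*}$ of the dual of $H$. The crucial structural fact is that the outer face $O$ is a \emph{leaf} of $T^{*}$: among the edges of $\sigma$ only $\epsilon$ is a non-tree edge, so $\epsilon^{*}$ is the only edge of $T^{*}$ incident with $O$, joining $O$ to some internal face $F_{i_0}$. For an interior non-tree edge $e$, the fundamental cycle $C_e$ is a simple cycle enclosing precisely the internal faces in the component of $T^{*}-e^{*}$ that avoids $O$; because $O$ stays with $F_{i_0}$, this component always contains at least one but not all of the faces, so $C_e$ encloses between $1$ and $m-1$ internal faces.

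The main obstacle is that I also need $\beta:=\sigma\,\Delta\,C_e$ to be a single cycle rather than a disjoint union $\sigma\sqcup C_e$; this occurs exactly when $C_e$ shares an edge with $\sigma$, equivalently (again by the duality above) when the face-set enclosed by $C_e$ contains an internal face incident with $\sigma$. I would resolve this by choosing $e$ well. Since $m\ge 2$, at least two internal faces are incident with $\sigma$ (a single incident face would have $\sigma$ as its whole boundary, forcing $m=1$), so there is such a face $F\neq F_{i_0}$; taking $e$ to be the interior edge whose dual joins $F$ to its parent toward $O$ in $T^{*}$ places $F$ on the side of $C_e$ away from $O$, whence $C_e$ meets $\sigma$ and $\beta$ is a simple cycle enclosing the complementary internal faces. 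Then $\alpha:=C_e$ and $\beta$ are cycles each enclosing fewer than $m$ internal faces, both contained in $\mathcal{U}_H+e\subseteq \mathcal{U}+e$ with $e\notin\mathcal{U}$, and $\sigma=\alpha\,\Delta\,\beta$. This exhibits Property $\Delta^{*}$ for $\sigma$ and closes the induction, so $Cl(\mathcal{C})$ is the whole set of cycles of $G$ and $\mathcal{C}$ is $\Delta^{*}$-dense.
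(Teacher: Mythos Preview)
The paper does not actually prove Theorem~\ref{carasinternas}; it merely quotes it from Li, Neumann-Lara, and Rivera-Campo \cite{LNR}. So there is nothing in the present paper to compare your argument against line by line. That said, your proposal is a correct, self-contained proof, and it is essentially the natural argument: induct on the number of internal faces enclosed, localise to the closed interior $H$ of $\sigma$, and use tree--cotree duality in $H$ to find the splitting edge.

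Two points that deserve a word more than you give them, though neither is a gap. First, the assertion that $\beta=\sigma\,\Delta\,C_e$ is a \emph{single} cycle is the only place where something could go wrong, and it follows because $C_e-e$ and $\sigma-\epsilon$ are both paths in the tree $T=\mathcal{U}_H-\epsilon$; two paths in a tree meet in a (possibly empty) subpath, and your choice of $e$ forces this subpath to be nonempty, so $\sigma$ and $C_e$ overlap in a single arc and their symmetric difference is again a cycle. It is worth saying this explicitly. Second, the claim that $H$ is $2$-connected (so that face boundaries in $H$ are cycles and duality behaves well) is correct for the reason you indicate---any component of $H-v$ disjoint from $\sigma$ consists entirely of interior vertices, whose $G$-edges all lie in $H$, so $v$ would separate $G$---but the one-line justification you give would benefit from this unpacking.

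Everything else---that $\mathcal{U}_H$ is a spanning unicycle of $H$, that $O$ is a leaf of $T^{*}$, that at least two internal faces touch $\sigma$ when $m\ge 2$, that the chosen $e$ is interior and lies outside $\mathcal{U}$, and that both $\alpha$ and $\beta$ enclose strictly fewer than $m$ faces---checks out.
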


\begin{theorem}
\label{porunaarista}
If $G$ is a $2$-connected graph and $\mathcal{C}$ is the set of cycles that contain a given edge $e$ of $G$, then $\mathcal{C}$ is $\Delta^*$-dense. 
\end{theorem}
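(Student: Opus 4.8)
The plan is to work with the fixed set $\mathcal{D}=Cl(\mathcal{C})$ directly. By construction the enlarging process stops only when no cycle outside the current set has Property $\Delta^*$, and this property is monotone under enlarging the set of cycles; hence $\mathcal{D}$ is a fixed point, in the sense that every cycle having Property $\Delta^*$ with respect to $\mathcal{D}$ already lies in $\mathcal{D}$. It therefore suffices to show that no cycle lies outside $\mathcal{D}$, and I would argue by contradiction, choosing a \emph{shortest} cycle $\sigma\notin\mathcal{D}$. Then every cycle shorter than $\sigma$ belongs to $\mathcal{D}$, as does every cycle through $e$ (these form $\mathcal{C}\subseteq\mathcal{D}$), so in particular $e\notin\sigma$; the goal is to prove that $\sigma$ nevertheless has Property $\Delta^*$ with respect to $\mathcal{D}$, which is the contradiction.

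First I would dispose of chords. If $g=xy$ is a chord of $\sigma$, it splits $\sigma$ into two arcs $A_1,A_2$, and the cycles $C_1=A_1\cup g$ and $C_2=A_2\cup g$ are strictly shorter than $\sigma$ (in a simple graph a chord joins two non-consecutive vertices, so each arc has length at least $2$), hence lie in $\mathcal{D}$, and $\sigma=C_1\Delta C_2$. Moreover, for \emph{every} unicycle $\mathcal{U}\supseteq\sigma$ the chord $g$ is not in $\mathcal{U}$ (whose unique cycle is $\sigma$) and $C_1,C_2\subseteq\mathcal{U}+g$. Thus $\sigma$ would have Property $\Delta^*$, a contradiction; so $\sigma$ must be an induced cycle.

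The core step is a uniform construction of the exchange inside an arbitrary unicycle $\mathcal{U}\supseteq\sigma$. For a vertex $w$ let $\pi(w)$ be the vertex of $\sigma$ closest to $w$ in $\mathcal{U}$. Any edge $f=pq\notin\mathcal{U}$ with $\pi(p)\neq\pi(q)$ produces, from the $\mathcal{U}$-paths joining $p$ and $q$ to $\sigma$ together with $f$, a path $H$ between the distinct vertices $\pi(p),\pi(q)$ of $\sigma$ that is internally disjoint from $\sigma$; writing $A_1,A_2$ for the two arcs of $\sigma$ between these vertices, the cycles $C_1=A_1\cup H$ and $C_2=A_2\cup H$ lie in $\mathcal{U}+f$ and satisfy $\sigma=C_1\Delta C_2$. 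Such an edge $f$ always exists, for if every non-$\mathcal{U}$ edge had both endpoints with the same projection, the common projection vertex would be a cut vertex of $G$, contradicting $2$-connectedness. The whole point is thus to choose $f$ so that $C_1,C_2\in\mathcal{D}$, and the cleanest way is to force the detour $H$ to pass through $e$: then both $C_1$ and $C_2$ contain $e$ and lie in $\mathcal{C}\subseteq\mathcal{D}$. When $e\notin\mathcal{U}$ and the endpoints of $e$ have distinct projections one simply takes $f=e$; when $e\in\mathcal{U}$, the edge $e$ is a bridge of $\mathcal{U}$ separating $\sigma$ from the rest, and a cut-vertex argument again produces a non-$\mathcal{U}$ edge whose fundamental path crosses $e$ and reaches two distinct vertices of $\sigma$.

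The main obstacle is the remaining configuration, in which $e\notin\mathcal{U}$ but both endpoints of $e$ lie in a single branch of $\mathcal{U}$ hanging from one vertex $z$ of $\sigma$, so that $\pi(a)=\pi(b)=z$. Here no single-edge detour can be routed through $e$, so one cannot force $C_1,C_2$ to pass through $e$; instead one must select the exchanging edge $f$ (again guaranteed by $2$-connectedness) so that $C_1$ and $C_2$ already lie in $\mathcal{D}$ for another reason, namely that they are strictly shorter than $\sigma$. Controlling the length of the detour uniformly over all such unicycles is the delicate point, and I expect to need a refined extremal choice of $\sigma$ — shortest and, among shortest cycles outside $\mathcal{D}$, nearest to $e$ — together with a careful choice of $f$ joining the branch at $z$ to the remainder of $\mathcal{U}$, so that the two resulting cycles are shorter than $\sigma$ and hence belong to $\mathcal{D}$.
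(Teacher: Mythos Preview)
The paper does not actually prove this theorem; it is quoted from Li, Neumann-Lara and Rivera-Campo~\cite{LNR}. There is thus no in-paper argument to compare your proposal against, and it has to stand on its own.

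It does not. In the case $e=ab\in\mathcal{U}$ you assert that ``a cut-vertex argument again produces a non-$\mathcal{U}$ edge whose fundamental path crosses $e$ and reaches two distinct vertices of $\sigma$.'' This fails in general. Since $e\notin\sigma$, both endpoints of $e$ lie in a single branch $T_z$ of $\mathcal{U}$ hanging from some $z\in V(\sigma)$, and the far side $U_2$ of the bridge $e$ sits entirely inside $T_z$. Two-connectedness of $G$ only prevents any single vertex from being a cut; it does \emph{not} force an edge of $G$ from $U_2$ into a different branch $T_{z'}$. Concretely, take $\sigma=z_1z_2z_3$, attach the path $z_1\,a\,b\,c$ at $z_1$, set $e=ab$, and add the three edges $ca$, $bz_1$, $az_2$. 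This $G$ is $2$-connected, yet for the unicycle $\mathcal{U}=\sigma\cup\{z_1a,ab,bc\}$ every non-$\mathcal{U}$ edge out of $U_2=\{b,c\}$ lands back in $T_{z_1}$. The \emph{only} one-edge decomposition of $\sigma$ available here is via $f=az_2$, producing the cycles $z_1az_2$ and $z_1az_2z_3$, neither of which contains $e$. So you cannot route the detour through $e$, and since $\sigma$ is a triangle there are no strictly shorter cycles to fall back on either.

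The same obstruction shows why the programme of your final paragraph cannot be completed as stated. Property~$\Delta^*$ quantifies over \emph{every} unicycle $\mathcal{U}\supseteq\sigma$, and the detour $H$ you build is made of edges of $\mathcal{U}$; its length is governed by $\mathcal{U}$, not by $\sigma$ or by any fixed distance from $\sigma$ to $e$ in $G$. For suitably chosen $\mathcal{U}$ the two cycles $C_1,C_2$ are forced to be much longer than $\sigma$, so a minimality hypothesis on $|\sigma|$ (even refined by ``nearest to $e$'') does not place $C_1,C_2$ in $\mathcal{D}$. The induction scheme you outline therefore does not close; a different organising principle is needed.
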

 
We end this section with the following immediate corollaries.

\begin{corollary}
\label{corcarasinternas}
Let $u$ and $v$ be vertices of a $2$-connected plane graph $G$. If $\mathcal{C}$ is the set of internal faces of $G$, then $\mathcal{P}_{\mathcal{C}}(G_{uv})$ is connected.
\end{corollary}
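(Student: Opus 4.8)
The plan is to obtain this statement as a direct composition of the two immediately preceding theorems, which is why it is billed as an immediate corollary. The hypotheses of Corollary \ref{corcarasinternas} are precisely those of Theorem \ref{carasinternas}: the graph $G$ is plane and $2$-connected, and $\mathcal{C}$ is taken to be the set of internal faces of $G$. So the first step is simply to invoke Theorem \ref{carasinternas} to conclude that this particular $\mathcal{C}$ is $\Delta^*$-dense.

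Having established $\Delta^*$-density, the second and final step is to feed this into Theorem \ref{sufficient}, which asserts that $\Delta^*$-density of $\mathcal{C}$ is sufficient for the connectedness of $\mathcal{P}_{\mathcal{C}}(G_{uv})$. Applying it with our $\mathcal{C}$ and the fixed vertices $u$ and $v$ yields that $\mathcal{P}_{\mathcal{C}}(G_{uv})$ is connected, which is exactly the conclusion sought.

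There is essentially no obstacle in the corollary itself, since all the work has been discharged earlier in the section. The genuinely nontrivial content lives in Lemma \ref{principal} (the case analysis showing that adding a cycle with Property $\Delta^*$ preserves connectedness) and in Theorem \ref{carasinternas} (the combinatorial fact, borrowed from Li \emph{et al} \cite{LNR}, that the internal faces of a plane $2$-connected graph close up under the $\Delta^*$ process to generate every cycle of $G$). The only point worth stating explicitly is that $u$ and $v$ play no role in the $\Delta^*$-density notion, so the closure $Cl(\mathcal{C})$ and hence Theorem \ref{carasinternas} apply uniformly to every choice of $u,v$; the dependence on $u$ and $v$ enters only through Theorem \ref{sufficient}, which already handles an arbitrary fixed pair. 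Thus the corollary follows for every pair of vertices $u,v$ of $G$ at once.
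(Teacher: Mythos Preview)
Your proof is correct and follows exactly the same approach as the paper: invoke Theorem \ref{carasinternas} to obtain that $\mathcal{C}$ is $\Delta^*$-dense, and then apply Theorem \ref{sufficient} to conclude that $\mathcal{P}_{\mathcal{C}}(G_{uv})$ is connected. The additional commentary about where the real work lies and the uniformity in $u,v$ is accurate and adds helpful perspective, though the paper's own proof is a single sentence.
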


\begin{proof}
By Theorem \ref{carasinternas}, $\mathcal{C}$ is $\Delta^*$-dense and by Theorem \ref{sufficient}, $\mathcal{P}_{\mathcal{C}}(G_{uv})$ is connected.
\end{proof}

\begin{corollary}
\label{corporunaarista}
Let $u$ and $v$ be vertices of a $2$-connected graph $G$. If $\mathcal{C}$ is the set of cycles of $G$ that contain a given edge $e$, then  $\mathcal{P}_{\mathcal{C}}(G_{uv})$ is connected.
\end{corollary}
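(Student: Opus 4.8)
The plan is to derive the statement by composing the two results that immediately precede it, exactly as in the proof of Corollary~\ref{corcarasinternas}. First I would observe that the hypothesis of the corollary is, word for word, the hypothesis of Theorem~\ref{porunaarista}: in both cases we are given a $2$-connected graph $G$ together with the family $\mathcal{C}$ consisting of all cycles of $G$ that pass through one fixed edge $e$. Since nothing about the pair $u,v$ enters that hypothesis, Theorem~\ref{porunaarista} applies verbatim and tells us that this particular $\mathcal{C}$ is $\Delta^{*}$-dense, i.e.\ that $Cl(\mathcal{C})$ is the whole set of cycles of $G$.

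Having established $\Delta^{*}$-density, the second and final step is simply to invoke Theorem~\ref{sufficient}, whose hypothesis is precisely that $\mathcal{C}$ is $\Delta^{*}$-dense and whose conclusion is that $\mathcal{P}_{\mathcal{C}}(G_{uv})$ is connected. Because Theorem~\ref{sufficient} (through Lemma~\ref{principal} and Theorem~\ref{connected}) already holds for every pair of vertices, no separate verification is needed for the given $u$ and $v$: connectedness of $\mathcal{P}_{\mathcal{C}}(G_{uv})$ follows at once.

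There is essentially no obstacle at the level of this corollary; it is a one-line consequence of Theorems~\ref{porunaarista} and~\ref{sufficient}, mirroring Corollary~\ref{corcarasinternas} with Theorem~\ref{porunaarista} playing the role of Theorem~\ref{carasinternas}. All the genuine difficulty has been discharged earlier, namely in the $\Delta^{*}$-density of the cycles through a fixed edge (cited from Li \emph{et al}) and in the reduction machinery of Lemma~\ref{principal} that drives Theorem~\ref{sufficient}. The only care I would take is to confirm that $e$ may indeed be taken to be an arbitrary edge of $G$ and that $\mathcal{C}$ is the family described in Theorem~\ref{porunaarista}, so that the citation is legitimate; once that match is verified, the chain of implications closes the argument.
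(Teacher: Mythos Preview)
Your proposal is correct and is essentially identical to the paper's proof: invoke Theorem~\ref{porunaarista} to conclude that $\mathcal{C}$ is $\Delta^{*}$-dense, then apply Theorem~\ref{sufficient} to obtain connectedness of $\mathcal{P}_{\mathcal{C}}(G_{uv})$. The paper states it in a single line, but the logical content is the same.
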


\begin{proof}
By Theorem \ref{porunaarista}, $\mathcal{C}$ is $\Delta^*$-dense and by Theorem \ref{sufficient}, $\mathcal{P}_{\mathcal{C}}(G_{uv})$ is connected.
\end{proof}

\begin{corollary}
\label{corporunvertice}
Let $u$ and $v$ be vertices of a $2$-connected graph $G$. If $\mathcal{C}_u$ is the set of cycles of $G$ that contain vertex $u$, then  $\mathcal{P}_{\mathcal{C}_u}(G_{uv})$ is connected.
\end{corollary}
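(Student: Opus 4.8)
The plan is to reduce this statement to the edge version already established in Corollary \ref{corporunaarista}, by exploiting the fact that enlarging the set $\mathcal{C}$ can only help connectivity. The observation I would record first is the monotonicity already used implicitly in the opening line of the proof of Lemma \ref{principal}: whenever $\mathcal{C}' \subseteq \mathcal{C}$, the graph $\mathcal{P}_{\mathcal{C}'}(G_{uv})$ is a spanning subgraph of $\mathcal{P}_{\mathcal{C}}(G_{uv})$, since both have the same vertex set (all $uv$ paths of $G$) and every edge $ST$ of the former satisfies $S \Delta T \in \mathcal{C}' \subseteq \mathcal{C}$ and hence is also an edge of the latter. Consequently, if $\mathcal{P}_{\mathcal{C}'}(G_{uv})$ is connected then so is $\mathcal{P}_{\mathcal{C}}(G_{uv})$.

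Next I would choose a concrete edge to compare against. Since $G$ is $2$-connected, the vertex $u$ has degree at least $2$; in particular there is an edge $e$ of $G$ incident with $u$. Let $\mathcal{C}_e$ denote the set of cycles of $G$ that contain $e$. The central, and essentially the only, point to verify is the inclusion $\mathcal{C}_e \subseteq \mathcal{C}_u$: any cycle that uses the edge $e$ necessarily passes through its endpoint $u$, and so contains the vertex $u$, hence belongs to $\mathcal{C}_u$.

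Finally I would assemble the pieces. By Corollary \ref{corporunaarista} the graph $\mathcal{P}_{\mathcal{C}_e}(G_{uv})$ is connected. Since $\mathcal{C}_e \subseteq \mathcal{C}_u$, the monotonicity recorded above shows that $\mathcal{P}_{\mathcal{C}_e}(G_{uv})$ is a connected spanning subgraph of $\mathcal{P}_{\mathcal{C}_u}(G_{uv})$, whence $\mathcal{P}_{\mathcal{C}_u}(G_{uv})$ is connected as well. I expect no genuine obstacle here; the only subtlety is recognizing that the task is not to prove $\mathcal{C}_u$ is $\Delta^*$-dense from scratch, but merely to dominate a smaller, already-treated family of cycles. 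As an alternative route one could instead note that the closure operation is monotone, so that $Cl(\mathcal{C}_e) \subseteq Cl(\mathcal{C}_u)$ forces $\mathcal{C}_u$ to be $\Delta^*$-dense, and then invoke Theorem \ref{sufficient}; but the spanning-subgraph argument is shorter and avoids appealing to any property of $Cl$ beyond what is already available.
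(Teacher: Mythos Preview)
Your proof is correct and follows essentially the same approach as the paper: pick an edge $e$ incident with $u$, observe $\mathcal{C}_e \subseteq \mathcal{C}_u$, invoke Corollary~\ref{corporunaarista} for $\mathcal{P}_{\mathcal{C}_e}(G_{uv})$, and conclude by the spanning-subgraph monotonicity. Your write-up is a bit more explicit about the monotonicity step than the paper's, but the argument is the same.
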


\begin{proof}
Let $e$ be an edge of $G$ incident with vertex $u$. Clearly the set $\mathcal{C}(e)$ of cycles that contain edge $e$ is a subset of the set $\mathcal{C}_u$. Therefore $\mathcal{P}_{\mathcal{C}(e)}(G_{uv})$ is a subgraph of  $\mathcal{P}_{\mathcal{C}_u}(G_{uv})$. By Corollary \ref{corporunaarista}, the graph $\mathcal{P}_{\mathcal{C}(e)}(G_{uv})$ is connected. 
\end{proof}

\end{document}